\documentclass[11pt]{amsart}

\usepackage{AFBoix2015}

%Incorpora correcciones de Stiofain

\begin{document}

\author[I. Blanco-Chac\'{o}n]{Iv\'{a}n Blanco-Chac\'{o}n}
\thanks{I. B-C. is supported by Science Foundation Ireland Grant 13/IA/1914 and by MINECO grant MTM2016-79400-P}
\address{School of Mathematical Sciences, University College Dublin, Belfield, Dublin 4, Ireland.}
\email{ivan.blanco-chacon@ucd.ie}

\author[A.\,F.\,Boix]{Alberto F.\,Boix}

\address{Department of Mathematics, Ben-Gurion University of the Negev, P.O.B. 653 Beer-Sheva 8410501, ISRAEL.}
\email{fernanal@post.bgu.ac.il}
\thanks{A. F.B. is supported by Israel Science Foundation (grant No. 844/14) and Spanish Ministerio de Econom\'ia y Competitividad MTM2016-7881-P}

\author[S. Fordham]{Stiof\'ain Fordham}
\address{School of Mathematical Sciences, University College Dublin, Belfield, Dublin 4, Ireland.}
\thanks{S. F. and E.S.Y. are supported by Science Foundation Ireland Grant 13/IA/1914}
\email{stiofain.fordham@ucdconnect.ie}

\author[E. S. Yilmaz]{Emrah Sercan Yilmaz}
\address{School of Mathematical Sciences, University College Dublin, Belfield, Dublin 4, Ireland.}
%\thanks{E. S. Y. is supported by Science Foundation Ireland Grant 13/IA/1914}
\email{emrahsercanyilmaz@gmail.com}

\keywords{Algorithm, Differential operator, Frobenius map, Prime characteristic.}

\subjclass[2010]{Primary 13A35; Secondary 13N10, 14B05}

\title{Differential operators and hyperelliptic curves over finite fields}

\begin{abstract}
Boix, De Stefani and Vanzo have characterized ordinary/supersingular elliptic curves over $\mathbb{F}_p$ in terms of the level of the defining cubic homogenous polynomial. We extend their study to arbitrary genus, in particular we prove that every ordinary  hyperelliptic curve $\mathcal{C}$ of genus $g\geq 2$ has level $2$. We provide a good number of examples and raise a conjecture.
\end{abstract}

\maketitle

\section{Introduction}

Let $k$ be any perfect field and $R=k[x_1,...,x_d]$ its polynomial ring in $d$ variables. In this case it is known \cite[IV, Th\'eor\`eme 16.11.2]{EGAIV} that the ring $\DD$ of $k$--linear differential operators on $R$ is the $R$-algebra (which we take here as a definition)
\[
\DD:=R \left\langle D_{x_i,t} \mid i=1,\ldots,d \mbox{ and } t\geq 1 \right\rangle\subseteq\mathrm{End}_{k}(R), 
\]
generated by the operators $D_{x_i,t}$, defined as
\[
D_{x_i,t}(x_j^s)=\begin{cases}
\binom{s}{t}x_i^{s-t},\text{ if }i=j\text{ and }s\geq t,\\
0,\text{ otherwise }.\end{cases}
\]
For a non-zero $f\in R$, the natural action of $\DD$ on $R$ extends to $R_f$ in such a way that $R_f=\DD\frac{1}{f^m}$, for some $m\geq 1$. Whilst there are examples of $m>1$ in characteristic $0$ (e.g. \cite[Example 23.13]{Twentyfourhours}), it is $m=1$ in positive characteristic (\cite[Theorem 3.7 and Corollary 3.8]{AlvarezBlickleLyubeznik2005}). This is shown by proving the existence of a differential operator $\delta\in\DD$ such that $\delta(1/f)=1/f^p$, i.e., $\delta$ acts as Frobenius on $1/f$. We will suppose that $k=\mathbb{F}_p$ and fix an algebraic closure $\overline{k}$ of $k$ from now on.

For an integer $e\geq 0$, let $R^{p^e}\subseteq R$ be the subring of all the $p^e$ powers of all the elements of $R$ and
set $\DD^{(e)}:=\End_{R^{p^e}} (R)$, the ring of $R^{p^e}$-linear ring-endomorphism of $R$. Since $R$ is a finitely generated $R^p$-module, by \cite[1.4.8 and 1.4.9]{Yekutiely1992}, it is 
\[
\DD=\bigcup_{e\geq 0}\DD^{(e)}.
\label{filter0}
\]
Therefore, for $\delta\in\DD$, there exists $e\geq 0$ such that $\delta\in \DD^{(e)}$ but $\delta\not\in \DD^{(e')}$ for any $e'<e$. Such number $e$ is called the level of $f$.

The level of a polynomial has been studied in \cite{AlvarezBlickleLyubeznik2005} and \cite{BoixDeStefaniVanzo2015}. In \cite{BoixDeStefaniVanzo2015}, an algorithm is given to compute the level and a good number of examples are exhibited. Moreover, if $f$ is a cubic smooth homogeneous polynomial defining an elliptic curve $\mathcal{C}=V(f)=\{(x:y:z)\in \mathbb{P}^2_k: f(x,y,z)=0\}$, the level of $f$ can be used to characterise the supersingularity of $\mathcal{C}$ in the following way:

\begin{teo}(\cite[Theorem 1.1]{BoixDeStefaniVanzo2015})
Let $f\in R$ be a cubic homogeneous polynomial such that $\mathcal{C}=V(f)$ is an elliptic curve over $k$. Denote by $e$ the level of $f$. Then
\begin{enumerate}[(i)]
\item $\mathcal{C}$ is ordinary if and only if $e=1$.
\item $\mathcal{C}$ is supersingular if and only if $e=2$.
\end{enumerate}
\label{intro1}
\end{teo}

The goal of the present work is to extend the results of \cite{BoixDeStefaniVanzo2015} to hyperelliptic curves of genus $g\geq 2$ defined over $k$. Such a curve $\mathcal{C}$ is birationally equivalent to the vanishing locus $V(f):=\{(x:y:z)\in \mathbb{P}^2_k: f(x,y,z)=0\}$, where $f$ is a homogeneous polynomial of degree $2g+1$ defined over $k$. We will write $\mathcal{C}\cong V(f)$. Notice that $V(f)$ is singular at the infinity point.

Denote by $\mathrm{Jac}(\mathcal{C})$ its Jacobian. It is well known that for
an integer $n>0$ then
\begin{align*}
    \mathrm{Jac}(\mathcal C)[n](\overline k)&=(\mathbb{Z}/n
    \mathbb{Z})^{2g}\quad \text{if $\mathrm{char}\, k  \not|\ n$,}\\
    \mathrm{Jac}(\mathcal C)[p^m](\overline
    k)&=(\mathbb{Z}/p^m\mathbb{Z})^i\quad \text{if $p=\mathrm{char}\, k,\
    m>0$},
\end{align*}
where $i$ can take every value in the range $0\le i \le g$, and is called the
$p$-rank of $\mathcal C$. 

For the convenience of the reader, we recall here the following standard terminology, which will be used in this work

\begin{df}The curve $\mathcal{C}$ is said to be ordinary if its
$p$-rank is maximal. The curve $\mathcal{C}$ is said to be supersingular (resp.\
superspecial) if $\mathrm{Jac}(\mathcal{C})$ is isogenous (resp.\ isomorphic)
over $\overline{k}$ to the product of $g$ supersingular elliptic curves.
\end{df}
Our generalisation of Theorem \ref{intro1} is as follows:

\begin{teo}Let $f\in R$ be a homogeneous polynomial of degree $2g+1$ such that $\mathcal{C}\cong V(f)$ defines a hyperelliptic curve over $\overline{k}$ of genus $g$ and denote by $e$ the level of $f$. Assume $p>2g^2-1$. Then
\begin{itemize}
        \item[(i)] $e=2$ if $\mathcal C$ is ordinary,
        \item[(ii)] $e>2$ if $\mathcal C$ is supersingular but not
          superspecial.
    \end{itemize}
\label{intro2}
\end{teo}
The remaining cases are more complicated and require further study, in particular it would be very interesting to find a relation (if such a relation exists), between the level, the $p$-rank, and the $\alpha_p$-number. We give some examples showing that, beyond the ordinary and supersingular cases, the level is not enough to determine the $p$-rank, namely, we exhibit: a) hyperelliptic curves of genus $2$ having $p$-rank $1$ and level $e=2$, b) hyperelliptic
curves of genus $2$ having $p$-rank $1$ and level $3$. Hence the level cannot be used to characterise the $p$-rank, being then $e=2$ a strict necessary condition for $\mathcal{C}$ to be ordinary. 

The superspecial case also remains open, although we exhibit c) two infinite families of superspecial hyperelliptic curves of arbitrary genus $g\geq 2$ such that, for a fixed genus $g$, there exist an infinite family of primes for which the curve has level $e>2$. Due to these explicit results plus computational evidence, we make the following

\begin{con}Let $f\in R$ be a homogeneous polynomial of degree $2g+1$ such that $\mathcal{C}\cong V(f)$ is a hyperelliptic curve over $\overline{k}$ of genus $g\geq 2$. Denote by $e$ the level of $f$. If $\mathcal{C}$ is superspecial, then $e>2$.
\end{con}

Our article is structured as follows: in Section \ref{preliminaries section} we review some definitions and properties of the level, following \cite{BoixDeStefaniVanzo2015}, as well as useful characterisations of the $p$-rank in terms of the Cartier-Manin matrix, following \cite{Yui1978}. We prove that the level is invariant under homogeneous change of coordinates, however, as we show via an example, the level is not invariant under birational transformation, and we will define the level of a hyperelliptic curve as the level of the polynomial defining its imaginary model . In Section \ref{results on the level} first, we prove that on the contrary to the elliptic case, the level of a hyperelliptic curve of genus $g\geq 2$ must be at least $2$. Then, we prove some technical results (Propositions 3.2 and 3.3), and use them to prove our main result, labelled above as Theorem \ref{intro2} and labelled as Theorem \ref{main result in the ordinary case} in Section \ref{results on the level}. Theorem \ref{main result in the supersingular case} relates the maximal non-vanishing power of the Cartier-Manin matrix with the level, which allows us to conclude the following:
\begin{cor}Let $\mathcal{C}$ be a hyperelliptic curve of genus $g\geq 2$ defined by a homogeneous polynomial $f$ of degree $2g+1$. Denote by $e$ the level of $f$. Assume $\mathcal{C}$ is supersingular but not superspecial. Then, $e>2$.
\end{cor}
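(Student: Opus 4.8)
The plan is to reduce the statement to Theorem~\ref{main result in the supersingular case} by rephrasing the two geometric hypotheses---supersingular and not superspecial---as linear-algebraic conditions on the Cartier--Manin matrix $A$ of $\mathcal{C}$, and then reading off a lower bound for $e$ from that theorem. Throughout I would work over $\overline{k}$ and use Yui's explicit description of $A$ for the imaginary model $y^{2}=f(x)$ recalled in Section~\ref{preliminaries section}; the standing hypothesis $p>2g^{2}-1$ guarantees that this description, and the numerics behind Theorem~\ref{main result in the supersingular case}, are valid.

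First I would record the translation dictionary. On the one hand, $\mathcal{C}$ is superspecial precisely when the Cartier operator on $H^{0}(\mathcal{C},\Omega^{1})$ vanishes identically, that is, when $A=0$; hence the hypothesis that $\mathcal{C}$ is \emph{not} superspecial yields $A\neq 0$. On the other hand, the $p$-rank of $\mathcal{C}$ equals the stable rank of the $p$-semilinear Frobenius, so it is computed by the eventual rank of the twisted iterates
\[
A^{[n]}:=A\,A^{(p)}\,A^{(p^{2})}\cdots A^{(p^{n-1})},
\]
where $A^{(p^{i})}$ denotes the entrywise $p^{i}$-th power twist. Since $\mathcal{C}$ is supersingular, all Newton slopes equal $\tfrac12$, so the $p$-rank is $0$; equivalently the semilinear Frobenius is nilpotent and $A^{[n]}=0$ for $n\gg 0$. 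Consequently there is a well-defined \emph{maximal non-vanishing power} $n_{0}:=\max\{\,n\ge 1 : A^{[n]}\neq 0\,\}$, which is finite, and because $A=A^{[1]}\neq 0$ we get $n_{0}\ge 1$.

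With these two facts in hand I would feed $n_{0}\ge 1$ into Theorem~\ref{main result in the supersingular case}. That theorem relates $e$ to $n_{0}$ in such a way that a non-trivial nilpotent Cartier--Manin matrix forces the Frobenius-inverting operator $\delta$ with $\delta(1/f)=1/f^{p}$ to require strictly more descent steps than the ordinary baseline of $2$; concretely it yields an inequality of the shape $e\ge n_{0}+2$. Since $n_{0}\ge 1$, this gives $e\ge 3>2$, which is the claim.

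The main obstacle is not the bookkeeping above but the faithful handling of the semilinear structure when invoking Theorem~\ref{main result in the supersingular case}: the Frobenius twists $A^{(p^{i})}$ must be tracked exactly, since it is the twisted product $A^{[n]}$---and not a naive matrix power $A^{n}$---that governs both the $p$-rank and the stabilisation length of Frobenius on the relevant local cohomology module, and it is this stabilisation length that the theorem converts into the level. A secondary point to check is that only $p$-rank $0$ is actually used to secure nilpotency of $A^{[n]}$, and that supersingularity implies $p$-rank $0$, so no hypothesis is wasted. Note finally that the argument is deliberately silent when $A=0$, that is, in the superspecial case: there $n_{0}$ is undefined, the inequality degenerates, and the conjectured strict bound $e>2$ lies outside the reach of Theorem~\ref{main result in the supersingular case}, consistently with the open status of the Conjecture.
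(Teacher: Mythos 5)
Your proposal is correct and its skeleton is the same as the paper's: both hypotheses are converted into statements about the Cartier--Manin matrix --- \emph{not superspecial} gives $C\neq 0$ (Nygaard), \emph{supersingular} gives nilpotency --- and then Theorem~\ref{main result in the supersingular case}, applied at the maximal non-vanishing power $r=n_0\geq 1$, yields $e>n_0+1\geq 2$. Where you differ is in how nilpotency is obtained and in the bookkeeping of Frobenius twists. The paper quotes Nygaard's congruence criterion for supersingularity (Theorem~\ref{conditions and the Cartier Manin matrix2}) to conclude $C^{g}=0$ outright, whereas you pass through the chain supersingular $\Rightarrow$ all Newton slopes $\tfrac12$ $\Rightarrow$ $p$-rank $0$ $\Rightarrow$ the $g$-fold twisted iterate vanishes (Manin's theorem); both are legitimate, and yours is arguably the more standard route. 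Your worry about twisted products $A^{[n]}$ versus naive powers $A^{n}$, however, is moot in this paper's setting: the curve is defined over the prime field $k=\mathbb{F}_p$, so the entries of the matrix in Definition~\ref{numerosc} are Frobenius-fixed and $A^{[n]}=A^{n}$; this is precisely the content of Lemma~\ref{high hassewitt} ($C_k=C^k$), and it is what makes the plain-power formulation of Theorem~\ref{main result in the supersingular case} --- and hence the paper's three-line proof --- legitimate. So there is no gap; just note that the semilinear caution, while the right instinct over general finite fields, is unnecessary here.
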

Finally, in Section \ref{some examples} we provide several numerical examples of hyperelliptic curves of genus 2 with $p$-rank 1 and different level (Subsection \ref{non ordinary examples}) and exhibit two infinite families of supersingular hyperelliptic curves of genus $g\geq 2$ with level bigger than $2$. On the grounds of this evidence, we close our work by posing a conjecture:

\begin{con}Let $\mathcal{C}$ be a superspecial hyperelliptic curve of genus $g$ over $k$. Then, the level is strictly bigger than $2$.
\end{con}

The present work extends the research already carried out by the authors for genus $2$ and presented in the 13th International Conference on Finite Fields and Applications, held in Gaeta, in June 2017. We take the occasion to thank the organisers for having allowed us to present our preliminary version.

Likewise, the authors are indebted to Gary McGuire for regular discussions on this topic as well as for carefully reading our manuscript and useful comments which have improved our work.

The following notation will be used throughout the article: for $\mathbf{\alpha}=(a_1,\ldots ,a_d)\in\N^d$ we shall denote $\mathbf{x}^{\mathbf{\alpha}}:=x_1^{a_1}\cdots x_d^{a_d}.$ and set $\norm{\mathbf{x}^{\mathbf{\alpha}}}:=\max\{a_1,\ldots ,a_d\}$, and sometimes, even $\norm{\mathbf{\alpha}}$ instead of $\norm{\mathbf{x}^{\mathbf{\alpha}}}$. For any polynomial $g\in k [x_1,\ldots ,x_d]$, we define
\[
\norm{g}:=\max_{\mathbf{x}^{\mathbf{\alpha}}\in\supp (g)} \norm{\mathbf{x}^{\mathbf{\alpha}}},
\]
where if $g=\sum_{\mathbf{\alpha}\in\N^d} g_{\mathbf{\alpha}} \mathbf{x}^{\mathbf{\alpha}}$, $\supp (g):=\left\{x^{\mathbf{\alpha}}\in R\mid\ g_{\mathbf{\alpha}}\neq 0\right\}$. Finally, the total degree of $g$ is
\[
\\mathrm{deg} (g):=\max_{\mathbf{x}^{\mathbf{\alpha}}\in\supp (g)} \mathrm{deg} (\mathbf{x}^{\mathbf{\alpha}}),\text{ where }\mathrm{deg}(\mathbf{x}^{\mathbf{\alpha}}):=
a_1+\ldots +a_d.
\]

\section{Preliminaries}\label{preliminaries section}

\subsection{Reminders about differential operators in prime characteristic}

First, let us denote $\DD^n:=R \left\langle D_{x_i,t} \mid i=1,\ldots,d \mbox{ and } 1\leq t\leq n \right\rangle$ so that one has
\begin{equation}
\DD=\bigcup_{e\geq 0}\DD^n.
\label{filter1}
\end{equation}
From \cite[1.4.8 and 1.4.9]{Yekutiely1992}, it is $\DD^{p^e-1}=\DD^{(e)}$ for each $e\geq 0$, and hence, by \ref{filter1}, it is also
\begin{equation}
\DD=\bigcup_{e\geq 0}\DD^{p^e-1},
\label{filter2}
\end{equation}
making $\DD$ a filtered ring.

\subsection{The ideal of $p^e$-th roots} For any ideal $J \subseteq R$, and for any integer $e\geq 1$, denote by $J^{[p^e]}$ the ideal generated (over $R$) by all the $p^e$-th powers of elements in $J$, or equivalently, by the $p^e$-th powers of any set of generators of $J$.

\begin{df}\cite[page 465]{AlvarezBlickleLyubeznik2005} and \cite[Definition 2.2]{BlickleMustataSmith2008}
Given $g\in R$, for any integer $e\geq 1$, we set $I_e(g)$ to be the smallest ideal $J\subseteq R$ such that $g\in J^{[p^e]}$.
\end{df}
\begin{rk}Since $R$ is a free $R^{p^e}$-module for each $e\in \N$, with basis given by the monomials $\{\mathbf{x}^{\mathbf{\alpha}} \mid \norm{\alpha} \leq p^e-1\}$, for $g\in R$ such that
\[
g=\sum_{0\leq\norm{\mathbf{\alpha}}\leq p^{e}-1}g_{\mathbf{\alpha}}^{p^e} \mathbf{x}^{\alpha},
\]
then $I_e (g)$ is the ideal of $R$ generated by elements $g_{\mathbf{\alpha}}$ \cite[Proposition 2.5]{BlickleMustataSmith2008}.
\label{remarkI}
\end{rk}

\begin{rk} \label{I_e homog} Notice that, if $g$ is a homogeneous polynomial, then, for all $e \in \N$, $I_e(g)$ is a homogeneous ideal. Indeed, if we write $g=\sum_{0\leq\norm{\mathbf{\alpha}}\leq p^{e}-1}g_{\mathbf{\alpha}}^{p^e} \mathbf{x}^{\alpha}$, then we can assume without loss of generality that every $g_\alpha^{p^e} \mathbf{x}^{\alpha}$ has degree equal to $\mathrm{deg}(g)$. Moreover, $g_\alpha$ is homogeneous of degree
\[
\mathrm{deg}(g_\alpha) = \frac{\mathrm{deg}(g)-\mathrm{deg}(\mathbf{x}^{\alpha})}{p^e}.
\]
\label{remarkdeg}
\end{rk}

The relation between ideals of $p^e$--th roots and differential operators is as follows: 

\begin{lm}\cite[Lemma 3.1]{AlvarezBlickleLyubeznik2005}
For any integer $e\geq 0$, it is $I_e (g)^{[p^e]}=\DD^{(e)}\cdot g$, where
$\DD^{(e)}\cdot g:=\{\delta (g):\ \delta\in\DD^{(e)}\}.$
\label{some piece of Frobenius descent}
\end{lm}

\subsection{The level of a polynomial}As pointed out in the introduction, the level of a non-zero polynomial $f\in R$ is defined as the minimal integer $e\geq 1$ such that there exist $\delta\in \DD^{p^e-1}$ with $\delta(1/f)=1/f^p$. Here we make this idea more precise and recall another characterization, following \cite[Definition 2.6]{BoixDeStefaniVanzo2015}. First, we observe that as an immediate consequence of \cite[Lemma 3.4]{AlvarezBlickleLyubeznik2005}, there is a decreasing chain of ideals
\begin{equation}\label{cadena incordio}
R=I_0 (f^{p^0-1})\supseteq I_1 (f^{p-1})\supseteq I_2(f^{p^2-1})\supseteq I_3 (f^{p^3-1})\supseteq\ldots
\end{equation}
The level can be characterised as the minimal step where this chain stabilises. More precisely:

\begin{teo}\cite[Proposition 3.5, and Theorem 3.7]{AlvarezBlickleLyubeznik2005}
Define
\[
e:=\inf\left\{s\geq 1\mid\ I_{s-1}\left(f^{p^{s-1}-1}\right)=I_s\left(f^{p^s-1}\right)\right\}.
\]
Then, the following assertions hold.
\begin{enumerate}[(i)]

\item The chain of ideals \eqref{cadena incordio} stabilizes rigidly, that is $e < \infty$ and $I_{e-1}\left(f^{p^{e-1}-1}\right)=I_{e+s} \left(f^{p^{e+s}-1}\right)$ for any $s\geq 0$.

\item One has
\[
e=\min\left\{s\geq 1\mid\ f^{p^s-p}\in I_s\left(f^{p^s-1}\right)^{[p^s]}\right\}.
\]

\item There exists $\delta\in\DD^{(e)}$ such that $\delta(f^{p^e-1}) = f^{p^e-p}$, or equivalently such that $\delta (1/f)=1/f^p$.

\item There is no $\delta '\in\DD^{(e')}$, with $e'<e$, such that $\delta ' (1/f)=1/f^p$.
\end{enumerate}
\label{first step of the algorithm}
\end{teo}

\begin{df}\label{the level for any polynomial}(\cite[Definition 2.6]{BoixDeStefaniVanzo2015})
For a non-zero polynomial $f \in R$, we call the integer $e$ defined in Theorem \ref{first step of the algorithm} the level of $f$. Also, we will say that $\delta \in \DD^{(e)}$ such that $\delta(f^{p^e-1}) = f^{p^e-p}$, or equivalently such that $\delta(1/f) = 1/f^p$, is a differential operator associated with $f$.
\end{df}

\subsection{The level of a hypersurface}Next, we prove that the level of a non-zero homogeneous polynomial $f\in R$ is an invariant under change of homogeneous coordinates, hence depending just on the projective hypersurface defined by $f$.

Let $k$ be (for the moment) any perfect field of prime characteristic $p$, let $R=k[x_1,\ldots ,x_d],$ standardly $\Z$--graded (meaning $\mathrm{deg} (x_i)=1$ for any $1\leq i\leq d$). Denote $G:=\operatorname{GL}_d (k)$ and observe that $R$ has a right action of $G$ defined by $(f|A)(x_1,...,x_d):=f(y_1,...,y_d),$ where
\[
\begin{pmatrix} y_1\\ \vdots\\ y_d\end{pmatrix}=A\cdot\begin{pmatrix} x_1\\ \vdots\\ x_d\end{pmatrix},
\]
for $A\in G$.
Observe as well that a matrix $A\in G$ induces an isomorphism $\phi_A$ of graded $k$--algebras $\xymatrix@1{R\ar[r]^-{\phi_A}& R}$ defined by $\phi_A(f)=f|A$. This is clear, since $A$ being invertible, both sets $\{\mathbf{x}^{\alpha}:\ \alpha\in\mathbb{Z}_{\ge0}^d\}$ and
$\{\mathbf{y}^{\beta}:\ \beta\in\mathbb{Z}_{\ge0}^d\}$ are $k$--basis of $R$ and for matrices $A,B\in G$ and $f\in R$, it is easy to check that $(f|A)|B=f|AB$.

\begin{df}
Given homogeneous $f,g\in R$, we say that $f$ and $g$ are $G$--equivalent if there is $A\in G$ such that $\phi_A(f)=g$.
\end{df}

\begin{lm}\label{homogeneous change of coordinates give p basis}
Notations as before, let $y_1,\ldots ,y_d\in R$ be homogeneous elements of degree $1$ such that
\[
\begin{pmatrix} y_1\\ \vdots\\ y_d\end{pmatrix}=A\cdot\begin{pmatrix} x_1\\ \vdots\\ x_d\end{pmatrix},
\]
for some $A\in G$ Then, for any $e\geq 1$ the set
\[
\mathcal{B}:=\{\mathbf{y}^{\alpha}:=y_1^{a_1}\cdots y_d^{a_d}:\ \alpha =(a_1,\ldots ,a_d)\in\mathbb{Z}_{\ge0}^d,\ 0\leq a_i\leq p^e-1\text{ for any }1\leq i\leq d\}
\]
is a basis of $R$ as $R^{p^e}$--module.
\end{lm}

\begin{proof}
Since $G$ is invertible, both $\{\mathbf{x}^{\alpha}:\ \alpha\in\mathbb{Z}_{\ge0}^d\}$ and
$\{\mathbf{y}^{\beta}:\ \beta\in\mathbb{Z}_{\ge0}^d\}$ are $k$--basis of $R$, hence it is enough to check that
\[
\mathcal{B}:=\{\mathbf{x}^{\alpha}:=x_1^{a_1}\cdots x_d^{a_d}:\ \alpha =(a_1,\ldots ,a_d)\in\mathbb{Z}_{\ge0}^d,\ 0\leq a_i\leq p^e-1\text{ for any }1\leq i\leq d\}
\]
is a basis of $R$ as $R^{p^e}$--module.

Indeed, let $g\in R,$ and write
\[
g=\sum_{\beta\in\mathbb{Z}_{\ge0}^d}g_{\beta}\mathbf{x}^{\beta},
\]
where $g_{\beta}\in k$ and $g_{\beta}=0$ up to a finite number of
terms; in this way, given $\beta=(b_1,\ldots ,b_d)\in\mathbb{Z}_{\ge0}^d$ so that
$\mathbf{x}^{\beta}\in\supp (g),$ after doing the Euclidean division
we can write in a unique way $b_i=q_i p^e+a_i,$ where both $q_i$
and $a_i$ are non--negative integers, and $0\leq a_i\leq p^e-1.$ Therefore, one
has that
\[
g=\sum_{\alpha\in\mathcal{B}}g_{\alpha}^{p^e}\mathbf{x}^{\alpha},
\]
for some $g_{\alpha}\in R$.
\end{proof}

We can now prove that the level is $G$--invariant, what is immediate after the following result.

\begin{teo}
Let $f,g\in R$ be homogeneous $G$--equivalent polynomials via $A\in G$. Then, for any $e\geq 0$, it is
$\phi_A (I_e (f))=I_e(g)$ and $\phi_A^{-1} (I_e (g))=I_e(f),$ where, given any ideal $J\subseteq R$ and a ring endomorphism $\xymatrix@1{R\ar[r]^-{\varphi}& R},$ $\varphi (J)$ denotes the ideal generated by the images of elements of $J$ under $\varphi.$
\label{the level is a geometric invariant}
\end{teo}

\begin{proof}
Setting
\[
\begin{pmatrix} y_1\\ \vdots\\ y_d\end{pmatrix}=A\cdot\begin{pmatrix} x_1\\ \vdots\\ x_d\end{pmatrix},
\]
and applying Lemma \ref{homogeneous change of coordinates give p basis} we see that for any $e\geq 1,$ the set
\[
\{\mathbf{y}^{\alpha}:=y_1^{a_1}\cdots y_d^{a_d}:\ \alpha =(a_1,\ldots ,a_d)\in\mathbb{Z}_{\ge0}^d,\ 0\leq a_i\leq p^e-1\text{ for any }1\leq i\leq d\}
\]
is a basis of $R$ as $R^{p^e}$--module.

Now, for $e\geq 0,$ and write
\[
f=\sum_{0\leq\norm{\alpha}\leq p^e-1}f_{\alpha}^{p^e}\mathbf{x}^{\alpha},
\]
for $f_{\alpha}\in R$. Then
\[
g=\phi_A(f)=\sum_{0\leq\norm{\alpha}\leq p^e-1}\phi_A(f_{\alpha})^{p^e}\mathbf{y}^{\alpha},
\]
which shows that $\phi_A (I_e (f))\subseteq I_e (g)$. Equality holds because 
$\phi_A$ is an isomorphism.
\end{proof}

Motivated by Theorem \ref{the level is a geometric invariant}, we introduce the following

\begin{df}\label{the level of a homogeneous hypersurface}
Let $H\subseteq\mathbb{P}_{k}^n$ be a projective hypersurface of positive degree defined
over a perfect field $k$ of prime characteristic $p$. We define the level of $H$ as
the level of any element in the below set:
\[
\{f\in K[x_0,\ldots ,x_n] :\ f\text{ is homogeneous, } \mathrm{deg} (f)=h,\  V(f)=H\}/\sim,
\]
where $\sim$ denotes $\operatorname{GL}_{n+1} (k)$--equivalence.
\end{df}

\subsection{Elliptic curves}
From now on, let $k=\mathbb{F}_p$ ($p>2$)and $R=k[x,y,z]$ from now on. Let $E\subseteq\mathbb{P}_{k}^2$ be an elliptic curve
defined over $k$ by a homogeneous cubic $f\in R$. For a prime $l\geq 2$, $E[l](\overline{k})$ is the kernel of the multiplication-by-$l$ isogeny $[l]$, which is a subgroup of $E(\overline{k})$. As mentioned in the introduction, for $l\neq p$, it is $|E[l](\overline{k})|=p^2$ whilst $|E[p](\overline{k})|=p^i$ (for $i=0,1$). If $|E[p](\overline{k})|=p$, then $E$ is said to be ordinary and otherwise supersingular.

Writing  $f^{p-1}=h\cdot (xyz)^{p-1}+\ldots,$ for some $h\in k$, it is also well known that $E$ is supersingular if and only if $h=0$ \cite[V.4.1]{Silverman1986}. The following characterization is given in \cite[Theorem 6.9]{BoixDeStefaniVanzo2015}.

\begin{teo}\label{starting point elliptic setting}
For $E$, $f$ and $p\geq 2$ as above, the following assertions hold.
\begin{enumerate}[(i)]

\item $E$ is ordinary if and only if $f$ has level one.

\item $E$ is supersingular if and only if $f$ has level two.

\end{enumerate}
\end{teo}

%\begin{rk}\label{remark about the Cartier differential operator}
Being ordinary or supersingular is related with the level via the Cartier differential operator
\[
\Delta:=\frac{1}{(p-1)!^3}\cdot\frac{\partial^{p-1}}{\partial z^{p-1}}
\frac{\partial^{p-1}}{\partial y^{p-1}}\frac{\partial^{p-1}}{\partial x^{p-1}}.
\]
If $E$ is ordinary, then $h\neq 0$ and therefore one has that
\[
\left(\frac{1}{h}\cdot\Delta\right)(f^{p-1})=1,
\]
which is to say that $f$ has level one. If $h=0$, then the best one can say is that there is a (not necessarily unique) differential operator $\delta$ of level two with the property that $\delta (f^{p^2-1})=f^{p^2-p}$. This operator can be expressed as a certain $R$-multiple (see \cite[Proposition 6.2]{BoixDeStefaniVanzo2015}
for some examples) of the operator
\[
\frac{1}{(p^2-1)!^3}\cdot\frac{\partial^{p^2-1}}{\partial z^{p^2-1}}
\frac{\partial^{p^2-1}}{\partial y^{p^2-1}}\frac{\partial^{p^2-1}}{\partial x^{p^2-1}}.
\]
%\end{rk}

\subsection{Hyperelliptic curves} Let $\mathcal{C}$ be a hyperelliptic curve of genus $g\geq 2$ defined over $k$ by an affine equation $f(x,y)=0$, with $f(x,y)=y^2-h(x)$, and $h(x)\in k[x]$ a polynomial with no multiple roots and with  degree $2g+2$ (real hyperelliptic curve) or $2g+1$ (imaginary hyperelliptic curve). Notice that the homogeneized  polynomial $\tilde{f}$ defines a plane curve which is singular at infinity, and it is only birationally equivalent to the hyperelliptic curve $\mathcal{C}$, which is smooth in $\mathbb{P}^3(k)$. Moreover, every hyperelliptic curve of genus $g$ is birationally equivalent to an imaginary model via the transformation
\begin{equation}
(x,y)\mapsto \left(\frac{1}{x-a},\frac{y}{(x-a)^{g+1}}\right),
\label{moebiusreal}
\end{equation}
where $a$ is a root of $h$.

As we see in the next example, in general the level is not invariant under this transformation.

\begin{ex}Consider the hyperelliptic curve with imaginary model given by the homogeneous polynomial $f(x,y,z)=y^2z^3-x^5-2x^3z^2-2x^2z^3-xz^4-2z^5$ over $\mathbb{F}_{13}$. The level of this polynomial is $3$. By applying ( \ref{moebiusreal} ) with $a=-1$, we obtain the homogeneous polynomial $h(x,y,z)=y^2z^4-2x^6+2x^4z^2-8x^3z^3+x^2z^4-6xz^5-8z^6$, which has level $2$.
\end{ex}

\begin{df}The level of a hyperelliptic curve of genus $g$ is the level of the polynomial defining its imaginary model, up to change of homogeneous coordinates.
\end{df}

Denote by $\mathrm{Jac}(\mathcal{C})$ the Jacobian of $\mathcal{C}$, which is an abelian variety defined over $k$ of dimension $g$.

The $p$-rank of $\mathcal{C}$ is defined to be the $p$-rank of $\mathrm{Jac}(\mathcal{C})$. Likewise, we say that $\mathcal{C}$ is ordinary (respectively, supersingular, superspecial) if so is $\mathrm{Jac}(\mathcal{C})$.

The ordinary/supersingular/superspecial character of $\mathcal{C}$ can be easily described by the Cartier operator, which we recall next.

\begin{df}\label{the cartier manin matrix in general}
For a positive integer $k\geq 1$, write
\[
h(x,1)^{(p^k-1)/2}=\sum_{j=0}^{N_k} c^{(k)}_j x^j,
\]
where $N_k:=((p^k-1)/2)(2g+1)$. It is convenient to define the $g\times g$ matrix with elements on $k$ 
\[
C_k:=\begin{pmatrix} c^{(k)}_{p^k-1} & c^{(k)}_{p^k-2} & \ldots & c^{(k)}_{p^k-g}\\
c^{(k)}_{2p^k-1}& c^{(k)}_{2p^k-2}& \ldots & c^{(k)}_{2p^k-g}\\
\vdots & \vdots & \ddots & \vdots\\
c^{(k)}_{gp^k-1}& c^{(k)}_{gp^k-2}& \ldots & c^{(k)}_{gp^k-g}\end{pmatrix}.
\]
Set $c_j:=c^{1}_j$ and notice that $C:=C_1$ is the Cartier-Manin matrix of $\mathcal{C}$. It corresponds to the Cartier operator acting on $H^0(\mathcal{C},\Omega_{\mathcal{C}})$, where $\Omega_{\mathcal{C}}$ stands for the sheaf of regular differentials on $\mathcal{C}$. Notice that since our curve is defined over $k$, the Cartier operator is linear (cf. \cite{achter} for details and a relevant recent discussion on the Cartier-Manin and Hasse-Witt matrices). 
\label{numerosc}
\end{df}

We will also make use of the following property.

\begin{lm}\label{high hassewitt} For $k\ge 1$, it is
$$
C_k=C^k.
$$
\end{lm}	
\begin{proof}
The case $k=1$ is trivial. Since 
$$
h(x,1)^{(p^k-1)/2}=(f(x,1)^{(p^{k-1}-1)/2})^ph(x,1)^{(p-1)/2},$$ 
it is 	
$$
C_k=C_{k-1}^pC,
$$
and since the coefficients of $f$ belong to $k$, they are invariant under the $p$-Frobenius and, by induction, the result holds.
\end{proof}

The matrices $C_k$ are related with the ordinary/supersingular/superspecial character of $\mathcal{C}$ in the following way.

\begin{teo}\cite[Theorem 3.1]{Yui1978}
Let $\mathcal{C}$ be a hyperelliptic curve of genus $g\geq 2$ defined over $k$  where $p\geq 7$. Then  $\mathcal{C}$ is ordinary if and only if $\rank (C)=g.$
\label{yuith}
\label{conditions and the Cartier Manin matrix}
\end{teo}

In the other extreme, Nygaard proved \cite[Theorems 4.1]{Nygaard1981} that $\mathcal{C}$ is superspecial if and only if $C$ is identically zero.

As for the supersingular case, the following result holds.

\begin{teo}\cite[Theorem 2.1]{Nygaard1983}
Let $\mathcal{C}$ be a hyperelliptic curve of genus $g\geq 2$ defined over $k$. Then, $\mathcal{C}$ is supersingular if and only if, for $1\leq i,j\leq g$ , it is

\[
c^{(g+2(n-1))}_{ip^{g+2(n-1)-j},g+(n-1)}\equiv 0\pmod{p^n},
\]

where $1\leq n\leq G$, with
\[
G=\begin{cases}
\frac{g^2+1}{2}-(g-1),\text{ if }g\text{ is odd ,}\\
\frac{g^2+1}{2}-\frac{3(g-1)}{2}\text{ if }g\text{ is even}.\end{cases}
\]
\label{conditions and the Cartier Manin matrix2}
\end{teo}

Notice that for $g=2$, Lemma \ref{high hassewitt} and Theorem \ref{conditions and the Cartier Manin matrix2} imply that $\mathcal{C}$ is supersingular if and only iff $C^2$ is identically zero. Moreover, this is equivalent to the $p$-rank being equally $0$.

\section{Results on the level of hyperelliptic curves}\label{results on the level}

Hereafter, unless otherwise is specified, assume  $p\geq 3$ and let $f(x,y,z)=y^2z^{2g+1}-h(x,z)$ with $h(x,z)\in k[x,z]$ a homogeneous polynomial of degree $2g+1$. For any field extension $K/k$, integers $r,n\geq 1$ and any matrix $G\in M_{r\times r}(K)$, denote by $G^{[n]}$ the matrix obtained by raising each entry of $G$ to the $n$-th power. Since the coefficients of $f$ belong to $k$, so are the numbers $c_i$ in Definition \ref{numerosc}, and hence, for the Cartier-Manin matrix $C$ of the curve defined by $f$, it is $C^{[p^r]}=C$ for each $r\geq 1$.

Unlike the elliptic case, the following holds:

\begin{prop}\label{level not one for higher genus: first try}Let $\mathcal{C}$ be a hyperelliptic curve over $k$ of genus $g\geq 2$ Then, if $p\geq 2g-1$, the level of $\mathcal{C}$ is strictly bigger than $1$.
\end{prop}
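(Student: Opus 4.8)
The plan is to use the ideal-theoretic description of the level in Theorem \ref{first step of the algorithm}(ii) and to reduce the statement ``the level is $1$'' to a purely numerical assertion about which monomials can occur in $f^{p-1}$, which a degree count then rules out. Throughout, $f$ denotes the degree-$(2g+1)$ homogeneous polynomial defining the imaginary model of $\mathcal C$, so that, by Theorem \ref{the level is a geometric invariant} and the definition of the level of a curve, the level of $\mathcal C$ equals the level of $f$.

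First I would observe that, by Theorem \ref{first step of the algorithm}(ii) applied with $s=1$, the level of $f$ is $1$ if and only if $f^{p-p}=1\in I_1(f^{p-1})^{[p]}$. Now for any ideal $J\subseteq R$ one has $J^{[p]}\subseteq J$, since $g^{p}=g\cdot g^{p-1}\in J$ for every $g\in J$; consequently $J^{[p]}=R$ if and only if $J=R$. Applying this to $J=I_1(f^{p-1})$, the level is $1$ if and only if $I_1(f^{p-1})=R$, and so it suffices to prove that $I_1(f^{p-1})$ is a proper ideal of $R$.

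Next I would expand $f^{p-1}=\sum_{0\leq\norm{\alpha}\leq p-1} g_\alpha^{p}\,\mathbf{x}^{\alpha}$ in the free $R^{p}$-basis $\{\mathbf{x}^\alpha=x^{a}y^{b}z^{c}:0\le a,b,c\le p-1\}$, so that by Remark \ref{remarkI} the ideal $I_1(f^{p-1})$ is generated by the elements $g_\alpha$. Since $f$ is homogeneous, Remark \ref{I_e homog} tells us this ideal is homogeneous and that each generator $g_\alpha$ is homogeneous of degree $\bigl((2g+1)(p-1)-\mathrm{deg}(\mathbf{x}^\alpha)\bigr)/p$. A homogeneous ideal equals $R$ precisely when it contains a nonzero constant, i.e. when some homogeneous generator has degree $0$; hence $I_1(f^{p-1})=R$ if and only if some $\mathbf{x}^\alpha$ with $0\le a,b,c\le p-1$ satisfies $\mathrm{deg}(\mathbf{x}^\alpha)=(2g+1)(p-1)$.

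The final step is the degree count that forbids this. A monomial $x^{a}y^{b}z^{c}$ with $0\le a,b,c\le p-1$ has total degree $a+b+c\le 3(p-1)$, whereas $(2g+1)(p-1)\ge 5(p-1)>3(p-1)$ once $g\ge 2$. Thus no generator $g_\alpha$ can have degree $0$, so $I_1(f^{p-1})\subseteq(x,y,z)$ is proper and the level is $>1$. I do not expect a genuine obstacle: the only point requiring care is keeping the homogenisation degree of $f$ correct (so that every monomial of $f^{p-1}$ really has degree $(2g+1)(p-1)$), and checking that for $g=1$ the same criterion forces $a=b=c=p-1$, recovering the coefficient of $(xyz)^{p-1}$ and hence the elliptic characterisation of Theorem \ref{starting point elliptic setting}. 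I would also note that the inequality used requires only $g\ge 2$ and $p\ge 2$, so the hypothesis $p\ge 2g-1$ is in fact not needed for this particular implication.
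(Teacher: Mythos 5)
Your proof is correct and takes essentially the same route as the paper's: expand $f^{p-1}$ in the $R^p$-basis, use homogeneity of the generators of $I_1(f^{p-1})$ together with a degree count to see that none of them can be a nonzero constant, and conclude that the ideal is proper so the level exceeds $1$. Your closing observation is also accurate: the paper uses the hypothesis $p\geq 2g-1$ only to obtain the stronger bound $\mathrm{deg}(f_{\alpha})\geq 2g-2$, while the contradiction with $1\in I_1(f^{p-1})$ needs only $\mathrm{deg}(f_{\alpha})>0$, which holds for all $g\geq 2$ and all $p$.
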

\begin{proof}
Let $\mathcal{C}$ be defined by a homogeneous polynomial $f\in R$ of degree $d=2g+1$. It is enough to check that the level of $f$ is strictly bigger than $1$. Indeed, if it were not the case, we would have $R=I_1(f^{p-1})$ and so, $1\in I_1(f^{p-1})$. But if we write $f^{p-1}=\displaystyle \sum_{||\alpha||\leq p-1} f_{\alpha}^px_{\alpha}$, for $\{x_{\alpha}\}_{||\alpha||\leq 3(p-1)}$ a basis of $R$ as $R^p$-module and since all the terms $f_{\alpha}^px_{\alpha}$ are distinct, it follows that 
$$p\mathrm{deg}(f_{\alpha})+\mathrm{deg}(x_{\alpha})=(2g+1)(p-1),$$
hence $\mathrm{deg}(f_{\alpha})\geq (2g-2)(1-1/p)$, so that $\mathrm{deg}(f_{\alpha})\geq 2g-2$ if $p\geq 2g-1$. Since, as seen in  \ref{remarkI}, $I_1(f^{p-1})$ is generated by the elements $f_{\alpha}$, this contradicts that $1\in I_1(f^{p-1})$.
\end{proof}

\subsection{Preliminary calculations}
Our main result will rely on the computations which we provide here. We will denote for now on $f(x,y,z)=y^2z^{2g-1}-h(x,z)$ with $h(x,z)\in k[x,z]$ a homogeneous polynomial of degree $2g+1$. For $r\geq 1$ write 
\begin{equation}\label{binomial expansion of curve}
f^{p^r-1}=\sum_{j=0}^{p^r-1}\binom{p^r-1}{j}y^{2(p^r-1-j)}z^{(2g-1)(p^r-1-j)}h(x,z)^j,
\end{equation}
and
\begin{equation}\label{Rp expansion of curve}
f^{p^r-1}=\sum_{||\alpha||\leq p^r-1}f_{\alpha}^{p^r}x^{\alpha},
\end{equation}
where the elements $\{x^{\alpha}\}_{||\alpha||\leq p^r-1}$ are an $R^{p^r}$-basis of $R$.

For a polynomial $h(x,y,z)\in R$, $\mathrm{deg}_x(h)$ stands for the degree of $h$ as a polynomial in $x$ with coefficients in the ring $k[y,z]$ and analogously for $\mathrm{deg}_y(h)$ and $\mathrm{deg}_z(h)$.

\begin{prop}Let $g\geq 2$ and $r\ge 1$ be an integer. Then the following assertions hold.
\begin{enumerate}
	\item[(i)] An $R$-multiple of $y^2$ cannot be an element of a generating set for $I_r$,
    \item[(ii)] Assume $p^r>2g^2-1$. Then, no monomial of the form $x^ayz^b$ can be an element of the generating set of $I_r$ if $a+b<2g-2$.
    \item[(iii)] Assume $p^r>2g^2-1$. Then, the elements $\{z^ix^{2g-2-i}\}_{i=0}^{g-2}$ cannot be generators of $I_r$.
    
    \end{enumerate}
   \label{lemaideal1}
\end{prop}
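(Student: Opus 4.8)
The plan is to read the coefficients $f_\alpha$ of the expansion \eqref{Rp expansion of curve} directly off the binomial expansion \eqref{binomial expansion of curve}, and then argue about their shape. Setting $m=p^r-1-j$, the only source of the variable $y$ in \eqref{binomial expansion of curve} is the factor $y^{2m}$ with $0\le m\le p^r-1$. Since $2m\le 2(p^r-1)<2p^r$, Euclidean division by $p^r$ gives $y^{2m}=(y^{q})^{p^r}y^{b}$ with $q\in\{0,1\}$, and the parity of the basis exponent $b$ detects $q$: because $p^r$ is odd, $b$ even forces $q=0$ and $b$ odd forces $q=1$. Hence every $f_\alpha$ carries the factor $y^{q}$ with $q\le 1$, so no $f_\alpha$ is divisible by $y^{2}$. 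Since by Remark \ref{remarkI} the $f_\alpha$ generate $I_r$, this already gives (i).

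For (ii) and (iii) I would fix $\alpha=(a,b,c)$ and isolate the corresponding coefficient. After removing the factor $y^{q}$ and the scalar $\binom{p^r-1}{j}$ (nonzero mod $p$ by Lucas), what remains is the coefficient of $x^{a}z^{c}$ in the $R^{p^r}$-expansion of the binary form $z^{(2g-1)m}h(x,z)^{n}$ with $n=p^r-1-m$. Writing $h(x,1)^{n}=\sum_{l}c_{l}x^{l}$ as in Definition \ref{numerosc}, this coefficient is the arithmetic-progression sum
\[
\lambda_{a,c}=\sum_{t\ge 0}c_{a+tp^r}\,x^{t}z^{D-t},\qquad D:=\mathrm{deg}(f_\alpha)-q,
\]
so that $f_\alpha$ equals a scalar times $y^{q}\lambda_{a,c}$. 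Consequently $f_\alpha$ is a single monomial only if exactly one coefficient $c_{a+tp^r}$ in this progression is nonzero.

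The next step is a degree count. Writing co-exponents $\bar a=p^r-1-a$, and similarly $\bar b,\bar c$, a generator of degree $D+q$ satisfies $\bar a+\bar b+\bar c=p^r(D+q)-(2g-2)(p^r-1)$, which for $p^r>2g-2$ forces $\mathrm{deg}(f_\alpha)\ge 2g-2$. Thus the monomials forbidden in (ii) (of degree $a+b+1\le 2g-2$) and in (iii) (of degree $2g-2$) can occur only at the minimal degree $2g-2$, which pins $n$ to the narrow window $n=(p^r-1)/2+s$ with $0\le s\le g-1$ and keeps $a$ within $\{p^r-1-\bar a\}$ for a small co-exponent $\bar a$. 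For such a minimal generator whose surviving monomial has $x$-exponent $a_0\ge g$ (exactly the range singled out in (ii) and (iii)), the lone nonzero coefficient would have to be $c_{a+a_0p^r}$; I would then show that $p^r>2g^2-1$ forces the index $a+a_0p^r$ to overshoot $\mathrm{deg}\,h(x,1)^{n}=(2g+1)n$, so that $c_{a+a_0p^r}=0$ and $\lambda_{a,c}$ cannot be that monomial.

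The main obstacle is precisely this last bookkeeping: one must compare $a+a_0p^r$ with $(2g+1)n$ \emph{simultaneously} over all admissible $s\in\{0,\dots,g-1\}$ and over the small co-exponent $\bar a$, and verify that the constant $2g^2-1$ is what pushes the surviving coefficient beyond the degree of the binary form in every configuration. The delicate cases are those with $m$ far from $(p^r-1)/2$ (equivalently $s$ near $g-1$), where $\mathrm{deg}\,h^{n}$ is largest and the degree margin is tightest; there one may have to argue not only that the top coefficient vanishes but also that a lower term $c_{a+tp^r}$ ($t<a_0$) survives, contradicting monomiality. Controlling these extreme configurations, rather than the clean $y$-degree argument behind (i), is where the real work of the proof lies.
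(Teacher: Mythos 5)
Your argument for (i) is correct and is exactly the paper's: every monomial of $f^{p^r-1}$ has $y$-degree $2(p^r-1-j)<2p^r$, so each coefficient $f_\alpha$ carries at most a first power of $y$. The problem is (ii) and (iii), which is where the hypothesis $p^r>2g^2-1$ must enter: what you give there is a programme, not a proof. Your two structural observations (the arithmetic-progression form of $f_\alpha$, and the bound $\mathrm{deg}(f_\alpha)\ge 2g-2$, which matches the paper's first reduction) are fine, but the decisive step --- verifying that $p^r>2g^2-1$ makes the required lone coefficient $c_{a+a_0p^r}$ overshoot $\mathrm{deg}\,h(x,1)^n=(2g+1)n$ in every admissible configuration --- is exactly what you leave undone, and you say so yourself (``I would then show...'', ``where the real work of the proof lies''). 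Moreover, your stated reduction is already incorrect for (ii): you restrict to surviving monomials with $x$-exponent $a_0\ge g$, but (ii) forbids all $x^ayz^b$ with $a+b=2g-3$, including $yz^{2g-3}$ and $xyz^{2g-4}$, whose $x$-exponent is small; for those the index $a+a_0p^r$ need not exceed $(2g+1)n$ at all, so the overshoot mechanism you propose cannot close those cases as stated.

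For comparison, the paper's proof of (ii)--(iii) needs none of this coefficient bookkeeping. Its key observation is the inequality $\mathrm{deg}_z\ge\frac{2g-1}{2}\mathrm{deg}_y$, valid for every monomial of $f^{p^r-1}$, because in the expansion (\ref{binomial expansion of curve}) the factor $y^{2(p^r-1-j)}$ always comes accompanied by at least $z^{(2g-1)(p^r-1-j)}$. If a forbidden monomial of degree $2g-2$ were a generator $f_\alpha$, then writing $x^\alpha=x^{p^r-a_1}y^{p^r-a_2}z^{p^r-a_3}$, homogeneity forces $a_1+a_2+a_3=2g+1$; hence $\mathrm{deg}_y(f_\alpha^{p^r}x^\alpha)$ is very large (at least $2p^r-(2g+1)$ in case (ii)), while $\mathrm{deg}_z(f_\alpha^{p^r}x^\alpha)$ is capped at $(2g-2)p^r$ (resp.\ at $(g-1)p^r$ in case (iii)) by the small $z$-exponent of $f_\alpha$. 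Feeding these bounds into the displayed inequality caps $p^r$ above by a constant of order $g^2$ (in case (ii), exactly $p^r\le 2g^2-1$), contradicting the hypothesis. Since the contradiction lives entirely at the level of degrees, the paper never needs to know which coefficients of $h(x,1)^n$ vanish, and the ``delicate extreme configurations'' you flag as the main obstacle simply never arise on that route.
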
	
\begin{proof}Since the degree in $y$ of each monomial in $f^{p^r-1}$ is between $0$ and $2(p^r-1)$, which is smaller than
  $2p^r$, assertion (i) holds.
  
For assertion (ii), assume that $x^ayz^b=f_{\alpha}$ for some $||\alpha||\leq p^r-1$ and $a+b<2g-2$. since $\mathrm{deg}_z(f_{\alpha}x^{\alpha})=bp^r+\mathrm{deg}_z(x^{\alpha})$, it must be $\mathrm{deg}_z(f_{\alpha}x^{\alpha})<(2g-2)p^r$. Hence, $a+b=2g-3$, for if it were smaller, then
$$
\mathrm{deg}(x^{\alpha})=(2g+1)(p^r-1)-(a+b+1)p^r>2p^r-2g+1>p^r,
$$
which is a contradiction. 

Write $\mathrm{deg}(f_{\alpha}x^{\alpha})=(2g-2)p^r+(2g+1)(p^r-1)-(2g-2)p^r=(2g-2)p^r+3p^r-(2g+1)$. This forces $\mathrm{deg}(x^{\alpha})=3p^r-(2g+1)$.

Setting $x^{\alpha}=x^{p^r-a_1}y^{p^r-a_2}z^{p^r-a_3}$ so that $\mathrm{deg}_y(f_{\alpha}^{p^r}x^{\alpha})=p^r-a_2$, it is $2g+1\geq 2p^r-\mathrm{deg}_y(f_{\alpha}^{p^r}x^{\alpha})$. On the other hand, from (\ref{binomial expansion of curve}) we see that 
$$
\mathrm{deg}_z(f_{\alpha}^{p^r}x^{\alpha})\geq \frac{2g-1}{2}\mathrm{deg}_y(f_{\alpha}^{p^r}x^{\alpha}),
$$
hence
$$
2g+1\geq 2p^r-\mathrm{deg}_y(f_{\alpha}^{p^r}x^{\alpha})\geq 2p^r-\frac{2}{2g-1}\mathrm{deg}_z(f_{\alpha}^{p^r}x^{\alpha})>\frac{2p^r}{2g-1}.
$$
This would yield $p^r\leq 2g^2-1$, which is a contradiction.

For (iii), if any monomial in the list were a generator $f_{\alpha}$, then $\mathrm{deg}_y(f_{\alpha}^{p^r}x^{\alpha})=p^r-a_2$ with $a_2\leq 2g+1$, as in (ii). Then, it would be
$$
\mathrm{deg}_z(f_{\alpha}^{p^r}x^{\alpha})\geq\frac{2g-1}{2}\mathrm{deg}_y(f_{\alpha}^{p^r}x^{\alpha})\geq\frac{(2g-1)(p^r-2g-1)}{2}=(g-1)p^r+\frac{p^r-4g^2+1}{2},
$$
a contradiction with the fact that the $z$-degrees of monomials are less than or equal $g+1$.
\end{proof}	

\begin{prop}Assume $p^r>2g^2-1$. Then the ideal $I_r$ is contained in the ideal
$$
M=(\{z^ax^b|a+b=2g-2,a\geq g-1,b\geq 0\}\cup\{z^ax^b|a+b=2g-1,0\leq a<g-1\}).
$$ 
\end{prop}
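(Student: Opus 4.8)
The plan is to prove $I_r\subseteq M$ by showing that every monomial occurring in a generator $f_\alpha$ of $I_r$ (from the expansion \eqref{Rp expansion of curve}) lies in the monomial ideal $M$. First I would record two structural facts. Since $f$ is homogeneous, Remark \ref{I_e homog} makes each $f_\alpha$ homogeneous of degree
\[
\mathrm{deg}(f_\alpha)=\frac{(2g+1)(p^r-1)-\mathrm{deg}(x^\alpha)}{p^r}\geq\frac{(2g-2)(p^r-1)}{p^r}>2g-3,
\]
using $\mathrm{deg}(x^\alpha)\leq 3(p^r-1)$ and $p^r>2g^2-1\geq 2g-2$; hence every generator has degree $D\geq 2g-2$. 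Moreover the $y$-degree of every monomial of $f^{p^r-1}$ is at most $2(p^r-1)<2p^r$ (cf.\ Proposition \ref{lemaideal1}(i)), so every monomial occurring in a generator has the form $x^by^{\varepsilon}z^a$ with $\varepsilon\in\{0,1\}$.

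Next I would describe the complement of $M$. Because the generators of $M$ involve only $x$ and $z$, a monomial $x^by^{\varepsilon}z^a$ lies in $M$ if and only if $x^bz^a$ does, and inspecting the two families of generators shows this happens exactly when $a+b\geq 2g-1$, or when $a+b=2g-2$ and $a\geq g-1$. Thus a monomial fails to lie in $M$ only if $a+b\leq 2g-3$, or if $a+b=2g-2$ and $a\leq g-2$. Splitting by the total degree $D=a+b+\varepsilon$: the bound $D\geq 2g-2$ excludes $a+b\leq 2g-3$ whenever $\varepsilon=0$, and for $D\geq 2g$ (any $\varepsilon$) or for $D=2g-1,\ \varepsilon=0$ one has $a+b\geq 2g-1$, so such monomials are automatically in $M$.

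The delicate cases are the boundary degrees, and here I would use the inequality $\mathrm{deg}_z(\mu)\geq\frac{2g-1}{2}\mathrm{deg}_y(\mu)$, valid for every monomial $\mu$ of $f^{p^r-1}$ by the binomial expansion \eqref{binomial expansion of curve}. A bad monomial $x^by^{\varepsilon}z^a$ of $f_\alpha$ is the image of $\mu=x^{bp^r+a_1}y^{\varepsilon p^r+a_2}z^{ap^r+a_3}$ in $f^{p^r-1}$, so $a=\lfloor \mathrm{deg}_z(\mu)/p^r\rfloor$. If $\varepsilon=1$ then $\mathrm{deg}_y(\mu)\geq p^r$, whence $\mathrm{deg}_z(\mu)\geq\frac{2g-1}{2}p^r>(g-1)p^r$ and therefore $a\geq g-1$; consequently every $\varepsilon=1$ monomial with $a+b\geq 2g-2$ already lies in $M$, which covers all $D\geq 2g-1$, and the only remaining $\varepsilon=1$ case is $D=2g-2$, where $a+b=2g-3$ and $x^byz^a$ is a monomial of the form excluded by Proposition \ref{lemaideal1}(ii). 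When $\varepsilon=0$ the only bad monomials left are those with $D=2g-2$ and $a\leq g-2$, that is $z^ax^{2g-2-a}$ with $0\leq a\leq g-2$, which are ruled out by Proposition \ref{lemaideal1}(iii). Collecting these exclusions gives $I_r\subseteq M$.

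I expect the main obstacle to be legitimately transferring Proposition \ref{lemaideal1}(ii)--(iii), whose statements concern generators that are single monomials, to the present situation, where a generator is in general $y^{\varepsilon}$ times a non-monomial homogeneous polynomial in $x$ and $z$. The point is that those exclusions must be read at the level of each monomial $\mu$ of $f^{p^r-1}$ separately, and it is precisely in this monomial-by-monomial degree bookkeeping that the sharp hypothesis $p^r>2g^2-1$ is consumed; keeping track of the interaction between $\mathrm{deg}_x(\mu)$, $\mathrm{deg}_y(\mu)$ and $\mathrm{deg}_z(\mu)$ (equivalently, of the carries $a_1,a_2,a_3$) is the computational heart of the argument.
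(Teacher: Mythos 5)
Your proposal is correct and takes essentially the same route as the paper's own proof: a lower bound $\mathrm{deg}(f_\alpha)\geq 2g-2$ coming from homogeneity, exclusion of $y$-degree $\geq 2$ via Proposition \ref{lemaideal1}(i), and elimination of the remaining monomials outside $M$ through Proposition \ref{lemaideal1}(ii)--(iii) together with the inequality $\mathrm{deg}_z(\mu)\geq\tfrac{2g-1}{2}\,\mathrm{deg}_y(\mu)$ from the binomial expansion. If anything, yours is the more rigorous rendering: the paper's proof treats each generator $f_\alpha$ as if it were a single monomial and dismisses all monomials $x^byz^a$ with $a+b\geq 2g-2$ as ``multiples of the elements without $y$'' (literally false for, e.g., $x^{2g-2}y$, should it occur), whereas your derivation that $\varepsilon=1$ forces the $z$-exponent $a\geq g-1$, and your monomial-by-monomial transfer of Proposition \ref{lemaideal1}(ii)--(iii) (legitimate, because those proofs are pure degree bookkeeping on monomials of $f^{p^r-1}$ and use only the homogeneity of the $f_\alpha$), supply exactly the steps the paper leaves implicit.
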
\label{lemaideal2}
\begin{proof}First, we observe that for $p^r>2g+1$, it must be $2g-2 \leq \mathrm{deg}(f_{\alpha})\leq 2g$. From Proposition \ref{lemaideal1} (i), the generators $f_{\alpha}$ cannot be multiple of $y^2$, so they belong to two categories: a) monomials of the form $x^ayz^b$ with $a+b\geq 2g-2$ (due to Proposition \ref{lemaideal1} (ii)) or b) monomials of the form $x^az^b$ with $a+b\leq 2g$.

For the elements of the form $x^az^b$, it must be $a+b\geq 2g-2$, so we have two cases:

\begin{itemize}
\item[a)] $a+b=2g-2$. This provides the list $\{x^iz^{2g-2-i}\}_{i=0}^{g-1}$, because of Proposition \ref{lemaideal1} (iii).
\item[b)] $a+b\geq 2g-1$. For $a+b=2g-1$, the monomials $z^ax^{2g-1-a}$ with $g-1\leq a\leq 2g-1$ are multiples of the elements in the list of case a), so this case provides the extra terms $\{z^ix^{2g-1-i}\}_{i=0}^{g-2}$. The monomials with $a+b=2g$ are multiples of cases a) or b).
\end{itemize}

Finally, the elements $x^ayz^b$ with $a+b\geq 2g-2$ are multiples of the elements without $y$.
\end{proof}

\begin{df}The ideal $M=(\{z^ax^b|a+b=2g-2,a\geq g-1,b\geq 0\}\cup\{z^ax^b|a+b=2g-1,0\leq a<g-1\})$ will be called the relevant ideal of $\mathcal{C}$.
\end{df}

\subsection{Ordinary hyperelliptic curves of genus $g\geq 2$}
We use here our former results to prove that the level of hyperelliptic curves of genus $g\geq 2$ is $2$.

\begin{teo}\label{main result in the ordinary case}Let $\mathcal{C}$ be a hyperelliptic curve over $k$ defined by a homogeneous polynomial of degree $2g+1$. Assume $p>2g^2-1$. If $\mathcal{C}$ is ordinary then 
$$I_1=I_2=M.$$ 
Therefore the level of the $\mathcal{C}$ is $2$.
\end{teo}				
\begin{proof}
First, for $r=1,2$, notice that for each $1\leq j\leq g$, the expansion of $f^{p^r-1}$ contains the linear combination
$$
\lambda(c_{p^r-j}z^{2g-2}+c_{2p^r-j}z^{2g-3}x+...+c_{gp^r-j}z^{g-1}x^{g-1})^{p^r})x^{p^r-j}y^{p^r-1}z^{p^r-(2g-j)},
$$
with $\lambda=(-1)^{\frac{p^r-1}{2}}{{p^r-1}\choose{\frac{p^r-1}{2}}}\neq 0$, hence for $1\leq j\leq g$, 
$$c_{p^r-j}z^{2g-2}+c_{2p^r-j}z^{2g-3}x+...+c_{gp^r-j}z^{g-1}x^{g-1}\in I_r.$$

Since both $\mathcal{C}$ and $\mathcal{C}^2$ are invertible (because of Theorem \ref{yuith}), it follows that the first $g$ generators of $M$, namely $\{z^ax^b\}_{a=g-1}^{2g-2}$, belong to $I_r$.

We need now to check that the the rest of the generators of $M$, i.e. $\{z^ax^b\}_{a=0}^{g-1}$ also belong to $I_r$. Let us start showing that $x^{g-1}z^{g-2}\in I_r$.

Set $u:=\left\lfloor\frac{(g-1)p^r}{2g-1}\right\rfloor-1$ and consider
$$
h_{u,1}(x,y,z)={{p^r-1}\choose{u}}(y^2z^{2g-1})^u(-x^{2g+1})^{p^r-1-u}.
$$
it is
$$
(g-1)p^r-2(2g-1)\leq \mathrm{deg}_z(h_{u,1})\leq (g-1)p^r-(2g-1),
$$
and
$$
(g+1)p^r+\frac{p^r}{2g-1}\leq \mathrm{deg}_x(h_{u,1})\leq (g+1)p^r-(2g+1).
$$

Since $p^r>2g^2-1$, it follows that $(g-2)p^r\leq \mathrm{deg}_z(h_{u,1})\leq {(g-2)p^r+(p^r-1)}$ and $(g+1)p^r\leq \mathrm{deg}_x(h_{u,1})\leq (g+1)p^r+(p^r-1)$, so the term $x^{g+1}z^{g-2}$ is a summand in some of the elements $f_{\alpha}$, generators of $I_r$.

Since $u<p^r-1$, $h_{u,1}$ only appears in the $u$-th term of the binomial sum (\ref{binomial expansion of curve}):
\begin{equation}
(y^2z^{2g-1})^u(g(x,z))^{p^r-1-u}=\mu h_{u,1}(x,y,z)+y^{2u}z^{(2g-1)u}[x^{(2g+1)(p^2-2-u)}+...]
\label{eqnaux}
\end{equation}
with $\mu\neq 0$. Setting $h_{u,1}(x,y,z)=x^{g+1}z^{g-2}x^{}\alpha$, with $x^{\alpha}$ an element in the basis of $R$ as $R^{p^r}$-module, and observing in Equation \ref{eqnaux} that the degree in $z$ of right hand summand is  bigger than or equal to $g-1$, the polynomial $f_{\alpha}$ containing the monomial $x^{g+1}z^{g-2}$ consists in the sum of a) this monomial (homogeneous of degree $2g-1$, b) a homogeneous component of degree $2g-2$ with degree in $z$ bigger than or equal to $g-1$ and no $y$ and c) a sum of homogeneous polynomials of smaller degrees.

Since $I_r$ is a homogeneous ideal, it follows that $f_{\alpha}$ contains, in its expansion, the term
$$
\rho x^{g+1}z^{g-2}+\sum_{i=g-1}^{2g-2}a_iz^ix^{2g-2-i}\in I_r$$ 
with $\rho\neq 0$. And since the terms $z^ix^{2g-2-i}$ belong to $I_r$ for $g-1\leq i\leq 2g-2$, also $x^{g+1}z^{g-2}\in I_r$.

The rest of the elements $x^{g+k}z^{g-k-1}$ can be now inductively proved to belong to $I_r$ for $2\leq k\leq g-1$: set
$$
u_k:=\left\lfloor\frac{(g-k)p^r}{2g-1}\right\rfloor-1
$$
and
$$
h_{u_k,k}(x,y,z)={{p^r-1}\choose{u_k}}(y^2z^{2g-1})^u(-x^{2g+1})^{p^r-1-u_k}.
$$
With the same argument as for $x^{g+1}z^{g-2}$, we show that $x^{g+k}z^{g-1-k}$ appears in the expansion of some generator $f_{\alpha_k}$ of $I_r$ and the the previous homogeneity argument, we conclude that $f_{\alpha_k}$ contains, in its expansion, the term
$$
\rho_k x^{g+k}z^{g-1-k}+\sum_{i=1}^{k-1}\rho_i x^{g+i}z^{g-1-i}+\sum_{i=g-1}^{2g-2}a_iz^ix^{2g-2-i}\in I_r
$$
with $\rho_i\neq 0$, hence $x^{g+k}z^{g-1-k}\in I_r$.

Finally, to finish the proof, we have to see that $x^{2g-1}\in I_r$, take a non-negative integer $0\leq u_r<2g+1$ with $2gp^r\equiv u_r \pmod{2g+1}$, and consider, in the binomial expansion (\ref{binomial expansion of curve}), the term
$$
(y^2x^{2g-1})^{\frac{p^r+u_r-(2g+1)}{2g+1}}(-x^{2g+1})^{\frac{2gp^r-u_r}{2g+1}},
$$
which appears with non-zero coefficient and can be written as
$$
\pm (x^{2g-1})^{p^r}x^{p^r-u_r}y^{\frac{2(p^r+u_r-(2g+1))}{2g+1}}z^{ \frac{(2g-1)(p^r+u_r-(2g+1))}{2g+1} }.
$$
\end{proof}

\subsection{Some partial results in the non-ordinary case}

\begin{lm}\label{ideal} For $r\ge 1$, there are elements generators $h_{r,1},...,h_{r,g}$ of $I_r$ of the form
$$\left[\begin{matrix}
	h_{r,1} & h_{r,2} & \cdots & h_{r,g}
	\end{matrix}\right]=\left[\begin{matrix}
	z^{2g-2} & xz^{2g-3} & \cdots & x^{g-1}z^{g-1}
	\end{matrix}\right]C_r.
$$
\end{lm}	
\begin{proof}
	It is enough to check that the expansion of $(y^2z^{2g-1})^{(p^r-1)/2}g(x,z)^{(p^r-1)/2}$ contains a sum of the form
	$$
	\sum_{j=1}^g\sum_{i=1}^gc^{(r)}_{ip^r-j}z^{(2g-1-i)p^r}x^{(i-1)p^r}x_{\alpha_{i,j}},
	$$
for $x_{\alpha_{i,j}}$ elements in the basis of $R$ as $R^{p^r}$-module. Notice that since the degree of $y$ is between $0$ and $2(p^r-1)$, the power $y^{p^r-1}$ do not appear anywhere else in the expansion of $f(x,y,z)^{p^r-1}$ than in the considered term so than the sum cannot cancel with expansions corresponding to other powers.

But, this is clear:
$$
y^{p^r-1}z^{\frac{(2g-1)(p^r-1)}{2}}c^{(r)}_{ip^r-j}x^{ip^r-j}z^{\frac{(2g+1)(p^r-1)}{2}-ip+j}=c^{(r)}_{ip^r-j}g_{r,i}^{p^r}x_{\alpha_{i,j}},
$$
with $x_{\alpha_{i,j}}=x^{p^r-j}y^{p^r-1}z^{p^r-(2g-j)}$.
\end{proof}

\begin{prop}
For $r\geq 1$, the $k$-linear combinations of the set $\{z^{2g-2-z}x^i \: | \: 0\le i \le g-1\}$ which belong to $I_1$ are generated by  the entries of the following row-vector: 
 $$[\begin{matrix}z^{2g-2} & \cdots & z^{g-1}x^{g-1} \end{matrix}]\cdot [C_g^r \: | \: C_g^{r-1} H_g],$$ 
where $H_g\in M_{g\times s_g}(k)$ with $s_g\geq 1$.
\end{prop}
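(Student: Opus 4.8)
The plan is to determine the degree-$(2g-2)$ homogeneous component of $I_r$ and to show that, written in coordinates, it is exactly the column space of the displayed block matrix. First I would reduce to this graded piece. Since $f$ is homogeneous, $I_r$ is a homogeneous ideal by Remark~\ref{I_e homog}, and by the containment $I_r\subseteq M$ proved in the previous proposition its lowest nonzero degree is at least $2g-2$, while the degree-$(2g-2)$ part of $M$ is precisely $\operatorname{span}_k\{z^{2g-2-i}x^i\mid 0\le i\le g-1\}$. Hence a $k$-linear combination of the monomials $z^{2g-2-i}x^i$ lies in $I_r$ if and only if it lies in the component $(I_r)_{2g-2}$, and, since $2g-2$ is the minimal generating degree, this component is the $k$-span of the degree-$(2g-2)$ homogeneous parts of the generators $f_\alpha$ of \eqref{Rp expansion of curve}. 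From here I would work throughout with coefficient vectors in $k^g$ relative to the ordered basis $(z^{2g-2},\ldots,z^{g-1}x^{g-1})$, writing $V_r\subseteq k^g$ for $(I_r)_{2g-2}$; the task is to produce a generating set for $V_r$ realizing the displayed block matrix, whose leading block is the appropriate power of the Cartier--Manin matrix $C=C_1$.

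I would obtain this through a one-step recursion $V_r=C\cdot V_{r-1}$, with $C$ acting as a linear map on $k^g$. The engine is the factorization $f^{p^r-1}=\big(f^{p^{r-1}-1}\big)^{p}f^{p-1}$ combined with two standard features of ideals of $p^e$-th roots: the projection formula $I_1(a^{p}h)=a\,I_1(h)$ and the composition law $I_r=I_{r-1}\circ I_1$. These yield $I_1(f^{p^r-1})=f^{p^{r-1}-1}\,I_1(f^{p-1})$, and propagating the degree-$(2g-2)$ coefficient data through the outer operation $I_{r-1}$ should realize the passage from level $r-1$ to level $r$ as left multiplication by $C$; here Lemma~\ref{high hassewitt} ($C_k=C^k$) and the identity $C^{[p^r]}=C$ are what let the Cartier coefficients at successive levels be identified with powers of the single matrix $C$. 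Iterating $V_r=C\,V_{r-1}$ reduces the whole statement to a base case.

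The base case $r=1$ I would treat by a direct reading of the binomial expansion \eqref{binomial expansion of curve}. A term of index $j$ carries $y^{2(p-1-j)}$, which on reduction modulo $R^{p}$ contributes an honest power of $y$ to $f_\alpha$ unless $j\ge (p-1)/2$; thus only these indices feed $(I_1)_{2g-2}$, distinct such $j$ give distinct powers of $y$, and (exactly as in Lemma~\ref{ideal}) no cancellation occurs among them. The principal index $j=(p-1)/2$ reproduces the generators of Lemma~\ref{ideal}, whose coefficient columns are those of $C$, while the finitely many correction indices $j=(p-1)/2+t$, $t\ge 1$, supply the remaining generators; collecting them defines the correction block $H_g\in M_{g\times s_g}(k)$, with $s_g$ the number of surviving correction generators, and gives $V_1=$ column space of $[\,C\mid H_g\,]$. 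Induction then yields $V_r=$ column space of $[\,C^{r}\mid C^{\,r-1}H_g\,]=C^{\,r-1}[\,C\mid H_g\,]$, as claimed.

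I expect the recursion of the second paragraph to be the main obstacle. The delicate points are to verify that exactly one factor of $C$ is gained per increment of $r$, so that the matrix multiplying $[\,C\mid H_g\,]$ is a clean power and the two-block shape is preserved, and that the correction block $H_g$ is genuinely independent of $r$ with no unexpected cancellations merging or annihilating generators. Controlling these is where the isolating role of the power $y^{p^r-1}$, the support bounds of Proposition~\ref{lemaideal1}, and the containment $I_r\subseteq M$ must be used most carefully.
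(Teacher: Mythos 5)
Your proposal is correct and takes essentially the same route as the paper: your base case $r=1$ is the paper's reading of the binomial expansion \eqref{binomial expansion of curve} (the index $j=(p-1)/2$ yielding the columns of $C$ exactly as in Lemma~\ref{ideal}, the higher odd indices yielding the block $H_g$), and your recursion built on $f^{p^r-1}=(f^{p^{r-1}-1})^p f^{p-1}$ together with Lemma~\ref{high hassewitt} is precisely the paper's factorization of the $R^{p^r}$-basis monomials through $R^{p^{r-1}}$-basis monomials with $y$-exponent $p^{r-1}-1$. Your phrasing via the projection formula, the composition law for ideals of $p$-th roots, and the explicit reduction to the degree-$(2g-2)$ graded piece using $I_r\subseteq M$ is just a more formal rendering of the same coefficient bookkeeping (note only that with your stated order of composition the outer $I_{r-1}$ contributes the factor $C^{r-1}$ all at once, giving $V_r=C^{r-1}V_1$ directly rather than the one-step recursion $V_r=CV_{r-1}$, which instead comes from the opposite factorization $f^{p^r-1}=(f^{p-1})^{p^{r-1}}f^{p^{r-1}-1}$; both give the claimed block matrix).
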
	
\begin{proof}
For $r=1$, write
$$
f^{p-1}=...+\sum_{i=0}^{g-1}a_i(x^iz^{2g-2-i})^p\sum_{\alpha,\beta,\gamma}x^{p-\alpha}y^{p-\beta}z^{p-\gamma}+...
$$
with $\alpha+\beta+\gamma=2g+1$. The value $\beta=1$ yields the product
$$
[\begin{matrix}z^{2g-2} & \cdots & z^{g-1}x^{g-1} \end{matrix}]\cdot [C_g],
$$
while odd values $3\leq \beta\leq 2g+1$ yield the columns
$$
[\begin{matrix}z^{2g-2} & \cdots & z^{g-1}x^{g-1} \end{matrix}]\cdot [H_g].
$$

For $r>1$, the $R^{p^r}$-basis terms in the expansion $f^{p^r-1}$ can be expressed as
$$
x^{p^r-\alpha} y^{p^r-\beta} z^{p^r-\gamma}=(x^{p^{r-1}-a}y^{p^{r-1}-1}z^{p^{r-1}-b})^px^{ap-\alpha} y^{p-\beta} z^{bp-\gamma},
$$
where $a+1+b=2g+1$. Again, collecting the $R^{p}$-coefficients for $\beta=1$ yields
$$
[\begin{matrix}z^{2g-2} & \cdots & z^{g-1}x^{g-1} \end{matrix}]\cdot C_g^r,
$$
and the odd terms $3\leq \beta\leq 2g+1$ yield
$$
[\begin{matrix}z^{2g-2} & \cdots & z^{g-1}x^{g-1} \end{matrix}]\cdot C_g^{r-1}H_g.
$$
\end{proof}

\begin{rk}By choosing a suitable ordering of the columns in the matrix $H_g$, we can assume that its first column is

$$
\left[\begin{array}{c}
c_{p-(g+1)}\\
c_{2p-(g+1)}\\
\vdots\\
c_{gp-(p+1)}
\end{array}
\right],
$$
i.e., it corresponds to the sum $\sum_{i=1}^{g}c_{ij-(g+1)}x^{(2g-1-i)p}z^{(i-1)p}x^{p-(g+1)}y^{p-1}z^{p-(2g-j-1)}$ in the expansion of $g(x,z)^{\frac{p-1}{2}}$.
\label{remH}
\end{rk}

\begin{teo}\label{main result in the supersingular case}
	If $C_g^r\ne 0$ but $C_g^{r+1}=0$, then the level is strictly greater than $r+1$.
\end{teo}	
\begin{proof}Since $C_g^r\ne 0$, there exits a non-zero linear combination of $\{z^{2g-2-z}x^i \: | \: 0\le i \le g-1\}$ in $I_r$. Since $C_g^{r+1}=C_g^{r+2}=0$, there is no non-zero linear combination of $\{z^{2g-2-z}x^i \: | \: 0\le i \le g-1\}$ in $I_{r+2}$. Therefore, $I_r\ne I_{r+2}$ and hence $I_r\ne I_{r+1}$ and $e>r+1$. 
\end{proof}

\begin{cor}Let $\mathcal{C}$ be a supersingular but not superspecial hyperelliptic curve of genus $g\geq 2$. Then, the level is strictly greater than $2$.
\end{cor}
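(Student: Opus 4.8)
The plan is to deduce the corollary directly from Theorem \ref{main result in the supersingular case}, whose hypothesis asks for an integer $r$ with $C_g^r\neq 0$ and $C_g^{r+1}=0$; here, as throughout this subsection, $C_g$ denotes the Cartier--Manin matrix $C$ of the genus-$g$ curve, so that $C_g^r=C^r=C_r$ by Lemma \ref{high hassewitt}. Thus the entire argument reduces to exhibiting such an $r$ with $r\geq 1$, since then Theorem \ref{main result in the supersingular case} gives that the level is $>r+1\geq 2$, i.e. strictly greater than $2$, as wanted.

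First I would record that $C\neq 0$. Because $\mathcal{C}$ is assumed to be \emph{not} superspecial, Nygaard's criterion \cite{Nygaard1981} (that $\mathcal{C}$ is superspecial if and only if $C$ is identically zero) forces the Cartier--Manin matrix $C$ to be nonzero. Next I would show that $C$ is nilpotent. Since $\mathcal{C}$ is supersingular, $\mathrm{Jac}(\mathcal{C})$ is isogenous over $\overline{k}$ to a product of $g$ supersingular elliptic curves, each of $p$-rank $0$; as the $p$-rank is an isogeny invariant and additive over products, the $p$-rank of $\mathcal{C}$ is $0$. On the other hand, the $p$-rank of $\mathcal{C}$ equals the stable rank of the Cartier--Manin matrix, namely $\rank(C_g)=\rank(C^g)$, where we use Lemma \ref{high hassewitt} together with the fact that $C^{[p]}=C$ (the entries lie in $\mathbb{F}_p$, so no Frobenius twists intervene). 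Hence $\rank(C^g)=0$, that is $C^g=0$, and therefore $C$ is a nonzero nilpotent $g\times g$ matrix.

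Finally, let $r$ be the largest integer with $C^r\neq 0$. It exists and satisfies $r\geq 1$ precisely because $C\neq 0$, and by maximality $C^{r+1}=0$ (hence also $C^{r+2}=0$, matching the hypothesis used in the proof of Theorem \ref{main result in the supersingular case}). Applying that theorem with this value of $r$ yields that the level of $\mathcal{C}$ is $>r+1\geq 2$, which is the desired conclusion. The only genuinely nontrivial input is the nilpotency of $C$, i.e. the identification of the $p$-rank with $\rank(C^g)$; once this is in place, everything else is a direct invocation of the previously established results, so I expect this identification to be the single point deserving a careful sentence.
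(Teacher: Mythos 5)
Your proof is correct, and its skeleton coincides with the paper's: from non-superspecialness and Nygaard's criterion \cite{Nygaard1981} get $C\neq 0$; from supersingularity get $C^g=0$; take $r\geq 1$ maximal with $C^r\neq 0$ and apply Theorem \ref{main result in the supersingular case} to conclude that the level exceeds $r+1\geq 2$. The one place where you genuinely diverge is the nilpotency step. The paper obtains $C^g=0$ by invoking Nygaard's supersingularity criterion as quoted in Theorem \ref{conditions and the Cartier Manin matrix2} (the mod-$p^n$ congruence conditions), whereas you argue: supersingular $\Rightarrow$ $p$-rank $0$ (isogeny invariance and additivity of the $p$-rank) $\Rightarrow$ $\mathrm{rank}(C^g)=0$, via the classical identification of the $p$-rank with the stable rank of the Cartier--Manin matrix, which over $\mathbb{F}_p$ is $\mathrm{rank}(C^g)$ since the Frobenius twists disappear, as you note. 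Your route is conceptually cleaner and avoids the paper's rather terse reading of Theorem \ref{conditions and the Cartier Manin matrix2}, but it rests on a fact the paper never states: the paper quotes a rank condition on $C$ only in the ordinary case (Theorem \ref{conditions and the Cartier Manin matrix}, from \cite{Yui1978}), not the general $p$-rank/stable-rank identity, so your proof needs an external citation there --- you correctly single this out as the one point requiring care. Two small merits of your write-up relative to the paper's: you make explicit that $C^{r+2}=0$ as well (which the proof of Theorem \ref{main result in the supersingular case} actually uses), and your choice of $r$ as the largest integer with $C^r\neq 0$ repairs the paper's typo ``$1\geq r<g$''.
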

\begin{proof}Since $\mathcal{C}$ is not superspecial, by Theorem 4.1 in \cite{Nygaard1981}, it is $C_g\neq 0$. Since $\mathcal{C}$ is supersingular, by Theorem \ref{conditions and the Cartier Manin matrix2}, it is also $C_g^g=0$, hence there exists $1\geq r<g$ such that $C_g^r\neq 0$ but $C_g^{r+1}=0$ and hence $e>r+1\geq 2$.
\end{proof}

\section{Some examples}\label{some examples}
So far, we have studied the ordinary and supersingular (but not superspecial) cases. In this last section we give computational and theoretical evidence which leads us to pose the following conjecture.

\begin{con}Let $\mathcal{C}$ be a superspecial hyperelliptic curve of genus $g$ over $k$. Then, the level of $\mathcal{C}$ is strictly bigger than $2$.
\end{con}

All the computer calculations appearing here were done with Macaulay2 \cite{M2} and Magma \cite{Magma}.

\subsection{The non-ordinary and non-superspecial cases: some examples}\label{non ordinary examples}

First, we show by three numerical examples that the level cannot be used in general to characterize the $p$-rank of hyperelliptic curves.

\begin{ex}\label{intermediate with also level 2}
Consider the curve defined by the quintic $f=y^2z^3-x^5-2z^5$, for 
$p=11$, $13$ and $17$. 

For $p=11$, the $p$-rank is $2$ (i.e. the curve is ordinary), and as predicted by Theorem 3.5, the algebra package gives level $2$, the chain of ideals of $p^e$th roots being $R \supset (z^2,xz,x^3).$

For $p=13$, the level of $f$ is $4$ and the $p$-rank is $0$. Finally, for $p=17$, the level of $f$ is $3$ and the $p$-rank is also $0$.
\end{ex}

\subsection{The superspecial case: some examples and a conjecture}
The authors do not know if, in general, the level of a superspecial curve is always bigger than or equal $3$. Indeed, denote
$$
\tilde{C_g}=[C_g| H_g].
$$
In the superspecial case, $C_g=0$, but if $\tilde{C_g}\neq 0$, it would be still true that there is a non-zero linear combination of the terms $\{z^{2g-1-i}z^i\}_{i=1}^{g}$ in $I_1$, while, again, there is none in $I_2$, which would yield $e>2$. It is not clear for the moment how to control the numbers $c_i$ from the coefficients of $f$ and the multinomial coefficients, so it is not clear whether in general $\tilde{C_g}=[C_g| H_g]$ vanishes or not. Our numerical examples seem to show that this is the case.

However, this is so for the following two infinite families.

\begin{ex}\label{first superspecial family}
Let $\mathcal{C}_p$ be the curve defined by $f=y^2z^{2g-1}-x^{2g+1}-\mu xz^{2g}$ with non zero $\mu\in k$. By Theorem 2 in \cite{Val1995}, $\mathcal{C}_p$ is superspecial if an only if $p\equiv 2g+1\pmod{4g}$ or $p\equiv -1\pmod{4g}$. In the first case, $c_{p-g-1}\neq 0$, and in the second case, $c_{gp-g-1}\neq 0$, so, by Remark \ref{remH}, it is $\tilde{C_g}\neq 0$ and hence $e>2$.
\end{ex}

Another family:

\begin{ex}\label{second superspecial family}
Let $\mathcal{C}_p$ be the curve defined by $f=y^2z^{2g-1}-x^{2g+1}-\mu z^{2g+1}$ with non zero $\mu\in k$. Again, by Theorem 2 in \cite{Val1995}, $\mathcal{C}_p$ is superspecial if an only if $p\equiv 2g+1\pmod{2g+1}$, what implies that $c_{gp-g-1}\neq 0$ and again $e>2$.
\end{ex}

Computational evidence together with these examples lead us to formulate the following conjecture, with which we conclude our work.

\begin{con}Let $\mathcal{C}$ be a superspecial hyperelliptic curve of genus $g$ over $k$. Then, the level is strictly bigger than $2$.
\end{con}

\bibliographystyle{alpha}
\bibliography{AFBoixReferences}

\end{document}